\newcommand{\ko}{\: , \;}
\newtheorem{thm}{Theorem}[section]
\newtheorem{cor}[thm]{Corollary}
\newtheorem{exm}[thm]{Example}
\newtheorem{prop}[thm]{Proposition}
\theoremstyle{definition}
\newtheorem{defn}[thm]{Definition}
\theoremstyle{remark}
\newtheorem{rem}[thm]{\bf Remark}
\numberwithin{equation}{section}
\newcommand{\Ext}{\mathrm{Ext}}
\newcommand{\HH}{\mathrm{HH}}
\newcommand{\Hom}{\mathrm{Hom}}
\newcommand{\op}{\mathrm{op}}
\newcommand{\iso}{\xrightarrow{_\sim}}
\newcommand{\liso}{\xleftarrow{_\sim}}
\newcommand{\smallotimes}{\mathbin{\mathpalette\make@small\otimes}}
\newcommand{\make@small}[2]{%
  \vcenter{\hbox{%
    $\m@th\ifx#1\displaystyle\scriptstyle\else\ifx#1\textstyle\scriptstyle
     \else\scriptscriptstyle\fi\fi#2$%
  }}%
}
\begin{document}

\title[$A_\infty$-deformations of zigzag algebras via Ginzburg dg algebras]{$A_\infty$-deformations of zigzag algebras\\[0.15cm] via Ginzburg dg algebras}

\author{Junyang Liu and Zhengfang Wang}

\subjclass[2020]{16E05, 13D03, 16G20, 16E60}
\thanks{}
\keywords{$A_\infty$-deformation, zigzag algebra, Ginzburg dg algebra, Hochschild cohomology, preprojective algebra}

\date{\today}

\begin{abstract}
This note aims to give a short proof of the recent result due to Etg\"u--Lekili (2017) and Lekili--Ueda (2021): the zigzag algebra of any finite tree over a field of characteristic $0$ is intrinsically formal if and only if the tree is of type $ADE$. We also complete the proof of this result by considering a field of arbitrary characteristic for type $E$, which was still open.
\end{abstract}

\maketitle

\section{Introduction}
The zigzag algebras associated with graphs are important algebras which naturally appear in various areas such as representation theory, symplectic geometry and categorification; see e.g.\ \cite{HK, EK, ST, EL, MT}. These algebras have many nice features. For instance, they are symmetric and related to Ginzburg dg algebras by derived Koszul duality; see \cite{HK} for more properties.  

In this short note, we provide an algebraic proof of the following result due to \cite{EL} and \cite{LU}. 
\begin{thm}\label{thm:ET}
Let $\Bbbk$ be a field and $\Gamma$ a finite tree. Then the zigzag algebra $Z(\Gamma)$, which is graded by path length,  is intrinsically formal if and only if $\Gamma$ is of type ADE and the characteristic of $\Bbbk$ is not $\mathrm{bad}$ ($2$ for type $D$, $2$ or $3$ for $E_6$ and $E_7$, and $2$, $3$ or $5$ for $E_8$). 
\end{thm}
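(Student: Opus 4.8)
The plan is to translate intrinsic formality into a Hochschild-cohomological vanishing statement and to carry out the resulting computation on the Koszul-dual side, where the Ginzburg dg algebra supplies an explicit small model. Regarding $Z(\Gamma)$ as an $A_\infty$-algebra with $m_2$ its multiplication, the obstruction theory of Kadeishvili shows that $Z(\Gamma)$ is intrinsically formal as soon as $\HH^n(Z(\Gamma))^{2-n}=0$ for every $n\ge 3$, the superscript denoting the internal (path-length) degree in which the transferred product $m_n$, of bidegree $(n,2-n)$, would live. I would then invoke the derived Koszul duality between zigzag and Ginzburg algebras recalled in the introduction: since Hochschild cohomology is a derived invariant, this yields an isomorphism $\HH^*(Z(\Gamma))\cong \HH^*(G(\Gamma))$, up to a regrading of the internal and homological degrees, where $G(\Gamma)$ is the Ginzburg dg algebra of $\Gamma$. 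Under this duality an $A_\infty$-structure on $Z(\Gamma)$ corresponds to a Calabi--Yau potential on the doubled quiver, and formality corresponds to gauge-equivalence of that potential with the canonical preprojective one; intrinsic formality thus becomes the assertion that every such potential is gauge-equivalent to the canonical one.

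Next I would make the computation on $G(\Gamma)$ explicit. Since $G(\Gamma)$ is $3$-Calabi--Yau, Van den Bergh duality converts the cochain computation into homology and produces a small, self-dual complex assembled from the vertex idempotents, the arrows $a, a^*$ of the doubled quiver, and the degree-shifted loops, with differential encoding the preprojective relation $\sum_a [a, a^*]$. Tracking the two gradings, the obstruction is detected by the higher products $m_n$ that send a maximal ``zigzag'' of arrows into the socle; in the ADE case the nilpotency of the preprojective algebra confines this to a single extremal degree governed by the Coxeter number, and there the obstruction is computed by a map of free $\mathbb{Z}$-modules built from the Cartan matrix $C_\Gamma = 2I - A_\Gamma$, with $A_\Gamma$ the adjacency matrix, together with the multiplicities of the longest paths. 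Trivializing the higher product then amounts to solving a linear system over $\Bbbk$ whose solvability is dictated by the arithmetic of this map.

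The two halves of the dichotomy now separate according to the rational versus the integral behaviour of this extremal group. The tree $\Gamma$ is of type ADE precisely when $C_\Gamma$ is positive definite, equivalently invertible over $\mathbb{Q}$; for a non-ADE tree $C_\Gamma$ is degenerate or indefinite, the extremal group acquires positive rank, and the surviving class produces a nontrivial $m_n$ in every characteristic, so $Z(\Gamma)$ is never intrinsically formal. For an ADE tree the rational obstruction vanishes, giving formality in characteristic zero, and the remaining obstruction is pure torsion; the content of the theorem is that its torsion primes are exactly those dividing the coefficients (marks) of the highest root---$2$ for type D, $2,3$ for $E_6$ and $E_7$, and $2,3,5$ for $E_8$---so that the higher product cannot be gauged away precisely when $\mathrm{char}\,\Bbbk$ divides one of these marks. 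I stress that this torsion is strictly finer than $\mathrm{coker}\,C_\Gamma$: for $E_8$ one has $\det C_\Gamma = 1$, so $\mathrm{coker}\,C_\Gamma = 0$, yet the bad primes $2,3,5$ persist, forcing the governing invariant to be the marks themselves rather than the determinant.

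The main obstacle is exactly this last point---performing the integral computation in the extremal degree finely enough to recover the marks, rather than a coarser invariant such as $\det C_\Gamma$, which would give the wrong primes. Arranging the bigrading so that a single map survives, and identifying its torsion with the highest-root coefficients, is the delicate, type-by-type heart of the argument, and is where I expect the real work to lie. The ``only if'' direction carries the additional burden of realizing the nonzero class by a genuine $A_\infty$-structure not quasi-isomorphic to the formal one; I would discharge this by writing down the corresponding nontrivial potential deformation of $G(\Gamma)$---adjoining the extremal cyclic term that cannot be absorbed once a mark is non-invertible---and transporting it back across the Koszul duality to an $A_\infty$-structure on $Z(\Gamma)$.
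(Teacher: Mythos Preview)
Your overall architecture---Kadeishvili's criterion, then Koszul duality to the Ginzburg side, then a Hochschild computation there, and for the converse a potential deformation transported back---is exactly the paper's. But there is a genuine error and a genuine gap. The error: the $2$-Ginzburg dg algebra $\Pi_2(Q)$ is $2$-Calabi--Yau, not $3$-Calabi--Yau (you may be thinking of Ginzburg algebras from quivers with potential). This matters, because Van den Bergh duality for a $2$-CY algebra gives $\HH^2\simeq\HH_0$, and that is precisely what makes the computation tractable: one obtains
\[
\HH^{2,q}(\Pi_2(Q),\Pi_2(Q))\;\simeq\;(\Lambda_Q/[\Lambda_Q,\Lambda_Q])^{q+2},
\]
the degree-$(q+2)$ piece of the zeroth Hochschild homology of the \emph{preprojective} algebra. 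The paper proves this directly from Keller's small bimodule resolution of $\Pi_2(Q)$ (no case analysis, no Cartan-matrix linear algebra), and then the entire arithmetic content---including the bad primes---is read off from Schedler's integral computation of $\Lambda_Q/[\Lambda_Q,\Lambda_Q]$. Your proposed route, by contrast, tries to extract the bad primes from a hand-built ``extremal degree'' map governed by $C_\Gamma$ and the marks; you correctly note that $\det C_\Gamma$ is too coarse (already $E_8$ shows this), but you do not actually carry out the finer computation, and you flag it yourself as ``where the real work lies.'' The paper's reduction to $\Lambda_Q/[\Lambda_Q,\Lambda_Q]$ is exactly what discharges that work.

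One further inaccuracy worth correcting: the obstruction is \emph{not} concentrated in a single extremal degree. In the non-ADE case $\Lambda_Q/[\Lambda_Q,\Lambda_Q]$ is infinite dimensional (so $\HH^{2,q}\neq 0$ for infinitely many $q>0$), and even in the ADE case over a bad prime Schedler's result shows nonvanishing in a range of degrees, not just at the Coxeter number. Your ``only if'' sketch---deform $d(t)$ by a cycle $w'$ in $\Lambda_Q$ and Koszul-dualise back---is essentially what the paper does.
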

We mention that this formality result plays an essential role in $4$-dimensional symplectic geometry as in \cite{EL}. The result was proved for types $A$ and $D$ in \cite[Thm.~40]{EL} and \cite[Thm.~44]{EL} (also see \cite[Lem.~4.21]{ST} for type $A$).
The type $E$ was conjectured in \cite[Conj.~19]{EL} and verified later in \cite[\S 5]{LU} when the characteristic of $\Bbbk$ is $0$ by a geometric approach based on the computation of the Hochschild cohomology of matrix factorisations \cite{Dyc}. We complete the proof of Theorem~\ref{thm:ET} by considering a field $\Bbbk$ of arbitrary characteristic for type $E$. 

Our proof is purely algebraic and relatively direct. First, by the derived Koszul duality between  $Z(\Gamma)$ and the $2$-dimensional Ginzburg dg algebra $\Pi_2(Q)$, where $Q$ is a quiver whose underlying graph is $\Gamma$, there is an isomorphism  
\[
\begin{tikzcd}
\HH^{p,q}(Z(\Gamma), Z(\Gamma)) \arrow[no head]{r}{\sim} & \HH^{p, q}(\Pi_2(Q), \Pi_2(Q))
\end{tikzcd}
\]
between the bigraded Hochschild cohomologies for all integers $p$ and $q$. Here both $Z(\Gamma)$ and $\Pi_2(Q)$ are viewed as differential bigraded algebras; see Proposition \ref{prop:Zigginzburghh}.

Secondly, we show in Proposition \ref{prop:ginzburghh} that we have
\[
\begin{tikzcd}
\HH^{2, q}(\Pi_2(Q), \Pi_2(Q)) \arrow{r}{\sim} & (\Lambda_Q/[\Lambda_Q, \Lambda_Q])^{q+2}
\end{tikzcd}
\]
for all integers $q$, where $\Lambda_Q$ is the preprojective algebra of $Q$; see \cite{GP}.  This isomorphism is the key ingredient in our proof, which  may be viewed as a bigraded version of Van den Bergh duality; see Remark \ref{remark:vandenberghduality}.  Combining the above two isomorphisms we obtain 
\[
\begin{tikzcd}
\HH^{2, q}(Z(\Gamma), Z(\Gamma)) \arrow[no head]{r}{\sim} & (\Lambda_Q/[\Lambda_Q, \Lambda_Q])^{q+2}
\end{tikzcd}
\]
for all integers $q$. Then Theorem \ref{thm:ET} follows by using the criterion for intrinsic formality (Theorem \ref{thm:KST})  and the description of $\Lambda_Q/[\Lambda_Q, \Lambda_Q]$ due to \cite[Thm.~13.1.1]{Sc} and \cite[Thm.~1.4]{MOV} which involves the bad characteristics; see Corollary \ref{corollary:HH}. 

\section{Bigraded Hochschild cohomology} 
As we will use bigradings of zigzag algebras and $2$-dimensional Ginzburg dg algebras and consider the bigraded Hochschild cohomology, let us fix notations for differential bigraded algebras; see \cite{Kel03}.

A {\it differential bigraded vector space} $(V, d)$ is a bigraded vector space $V = \bigoplus_{p, q\in \mathbb Z} V^{p, q}$ together with a differential $d$ of bidegree $(1, 0)$, where the first grading denotes the cohomological grading and the second one denotes the Adams grading which is viewed as an additional grading. Notice that we obtain a complex $V^{\bullet, q}$ for each fixed integer $q$. 

For any two differential bigraded vector spaces $(U, d_U)$ and $(V, d_V)$, we may consider the differential bigraded $\Hom$-space $\Hom^{\bullet, \bullet}(U, V) = \bigoplus_{p,q\in \mathbb Z} \Hom^{p,q}(U, V)$ and the tensor product $U \otimes V= \bigoplus_{p,q\in\mathbb Z}(U \otimes V)^{p, q}$ with the usual differentials, where 
\[
\Hom^{p,q}(U, V) = \prod_{i,j\in \mathbb Z} \Hom_\Bbbk(U^{i,j}, V^{p+i, q+j}) \ko (U \otimes V)^{p,q} = \bigoplus_{i,j\in \mathbb Z} U^{i,j} \otimes_\Bbbk V^{p-i,q-j}\: .
\]
Let $(A, d)$ be a differential bigraded algebra (whose multiplication is of bidegree $(0,0)$). As usual, we may define differential bigraded $A$-modules (whose differentials are of bidegree $(1,0)$) and the associated derived category. Then, for any two differential bigraded $A$-modules $M$ and $N$, the derived Hom-space $\mathbb{R}\mathrm{Hom}^{\bullet,\bullet}_A(M,N)$ carries a bigrading such that we have
\[
\mathrm{H}^p(\mathbb{R}\mathrm{Hom}^{\bullet, q}_A(M, N)) = \Ext^{p,q}_A(M, N)
\]
for all integers $p$ and $q$. Similar to the usual case, the bigraded  Hochschild cohomology $\HH^{p,q}(A, A)=\Ext_{A^e}^{p, q}(A, A)$ may be computed by the bigraded Hochschild cochain complex $C^{\bullet, \bullet}(A, A)$ with the usual differential, where
\[
C^{p,q}(A, A)=\prod_{i\geq 0} \Hom^{p-i,q}(A^{\otimes i}, A)\: .
\]
Namely, we have
\[
\HH^{p,q}(A, A)=\mathrm{H}^p(C^{\bullet, q}(A, A))\: .
\]
Here we denote the enveloping algebra of $A$ by $A^e= A \otimes A^{\op}$, which carries a natural bigrading. 

\section{$2$-dimensional Ginzburg dg algebras and Hochschild cohomology}
\subsection{$2$-dimensional Ginzburg dg algebras}
Let $\Bbbk$ be a field of any characteristic and $Q$ a finite connected quiver. Denote by $\widetilde Q$ the double quiver obtained from $Q$ by adding an arrow $\alpha^*\colon j \to i$ for each arrow $\alpha\colon i \to j$. Denote by $\overline Q$ the quiver obtained from $\widetilde Q$ by additionally adding a loop $t_i$ at each vertex $i \in Q_0$. We view $\overline Q$ as a graded quiver by assigning each arrow in $\widetilde Q$ degree $0$ and each loop $t_i$ degree $-1$. The {\it $2$-dimensional (non-complete) Ginzburg dg algebra} $\Pi_2(Q)$ by definition is the dg path algebra $(\Bbbk \overline Q, d)$, where the differential $d$ is determined by
\[
d(\alpha) = 0 = d(\alpha^*)\text{ for all $\alpha \in Q_1$}\quad \mbox{and}\quad d(t) = \sum_{\alpha\in Q_1} [\alpha, \alpha^*]\ko \text{where }t = \sum_{i\in Q_0} t_i \: .
\]
Then we have $d(t_i) = e_i d(t) e_i$ since $d(e_i) = 0$ for all $i \in Q_0$.

\subsection{A small cofibrant resolution}
Let us simply denote the $2$-dimensional Ginzburg dg algebra $\Pi_2(Q)$ by $B$. We also denote $\otimes = \otimes_{\Bbbk Q_0}$ and $\Hom = \Hom_{(\Bbbk Q_0)^e}$ from now on. 

We recall from \cite{Kel11} a small cofibrant resolution of $B$. First, we have a cofibrant dg $B$-bimodule $(B \otimes \Bbbk \overline Q_1\otimes B, \delta)$, where the differential $\delta$ is defined as the composition
\[
\begin{tikzcd}
B \otimes \Bbbk \overline Q_1 \otimes B \arrow{r}{d'} & B \otimes B \otimes B \arrow{r}{\rho} & B \otimes \Bbbk \overline Q_1 \otimes B \: .
\end{tikzcd}
\]
Here $\overline Q_1$ is the set of arrows in the quiver $\overline Q$ and the two maps $d'$ and $\rho$ are determined by
\begin{align*}
d'(a \otimes x \otimes b ) &= (-1)^{|a|} a d(x) b \quad \mbox{and} \\
\rho(a \otimes x_1x_2\ldots x_p \otimes b) &=\sum_{i=1}^p a x_1 \ldots x_{i-1} \otimes x_i \otimes x_{i+1} \ldots x_p b
\end{align*}
for all arrows $x$ in $\overline Q$, paths $x_1 x_2 \ldots x_p$ in $\overline Q$, and $a$, $b\in B$.

\begin{prop}[{\cite[Proposition 3.7]{Kel11}}]\label{prop:resolution}
The mapping cone $\mathrm{Cone}(\theta)$ of the map
\[
\theta \colon (B \otimes \Bbbk \overline Q_1\otimes B, \delta) \longrightarrow B\otimes B
\]
given by $\theta(a \otimes x \otimes b) = ax \otimes b - a \otimes xb$
is a cofibrant resolution of $B$ as a dg $B$-bimodule.
\end{prop}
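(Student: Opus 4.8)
The plan is to realise $\mathrm{Cone}(\theta)$ as the dg enhancement of the classical bimodule resolution of a tensor (path) algebra, and to prove the two defining properties separately: that it is cofibrant as a dg $B$-bimodule, and that the induced augmentation to $B$ is a quasi-isomorphism. Write $R = \Bbbk Q_0$ and $W = \Bbbk \overline Q_1$, and let $B^\sharp = T_R(W) = \bigoplus_{n \geq 0} W^{\otimes n}$ be the underlying graded algebra obtained by forgetting $d$ (here $\otimes = \otimes_R$). The augmentation $\mathrm{Cone}(\theta) \To B$ is induced by the multiplication map $\mu \colon B \otimes B \To B$, extended by zero on the shifted copy of $B \otimes \Bbbk\overline Q_1 \otimes B$; it descends to the cone because $\mu \circ \theta = 0$, as $\mu(ax \otimes b - a \otimes xb) = axb - axb = 0$.

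First I would verify that $\theta$ is a morphism of dg $B$-bimodules, i.e.\ that it commutes with the differential $\delta = \rho \circ d'$ on the source and the internal differential of $B\otimes B$. This is a direct computation from the definitions of $d'$, $\rho$ and the Leibniz rule for $d$, and it guarantees that $\mathrm{Cone}(\theta)$ is a well-defined dg $B$-bimodule.

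Next, for cofibrancy, I would observe that both $B \otimes \Bbbk\overline Q_1 \otimes B$ and $B \otimes B$ are summands of free $B$-bimodules: since $R = \Bbbk Q_0$ is a separable (indeed semisimple) $\Bbbk$-algebra, any bimodule $B \otimes_R U \otimes_R B$ is a direct summand of $B \otimes_\Bbbk U \otimes_\Bbbk B$, hence projective over $B^e$. The mapping cone carries an exhaustive filtration by path length whose successive subquotients are such free bimodules with zero differential, which is exactly Keller's property (P); thus $\mathrm{Cone}(\theta)$ is cofibrant, and it remains only to prove that the augmentation is a quasi-isomorphism.

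The heart of the argument is this quasi-isomorphism, which I would deduce from the classical case by a perturbation/filtration comparison. Forgetting $d$, the augmented complex
\[
0 \To B^\sharp \otimes W \otimes B^\sharp \xrightarrow{\theta} B^\sharp \otimes B^\sharp \xrightarrow{\mu} B^\sharp \To 0
\]
is the standard projective bimodule resolution of the tensor algebra $B^\sharp = T_R(W)$; it is contractible as a complex of graded $R$-bimodules, with an explicit contracting homotopy obtained by splitting paths. The key observation is that the internal differential $d$ strictly raises path length — it annihilates the degree-zero arrows $\alpha, \alpha^*$ and sends each loop $t_i$ to $e_i(\sum_{\alpha} [\alpha, \alpha^*]) e_i$, which has path length two — whereas $\theta$, $\mu$ and the contracting homotopy all have bounded filtration degree. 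Hence $d$ is a filtered perturbation of a contractible differential, and by the homotopy perturbation lemma (equivalently, by the convergent spectral sequence of the path-length filtration, whose first page is the classical resolution) the perturbed augmented complex is again acyclic. Therefore $\mathrm{Cone}(\theta) \To B$ is a quasi-isomorphism, and together with cofibrancy this shows it is a projective resolution of $B$ as a $B$-bimodule. I expect the main obstacle to be the bookkeeping of signs in checking that $\theta$ is a chain map and that the path-length filtration is genuinely compatible with every component of the total differential; once the associated graded is identified with the classical tensor-algebra resolution, acyclicity is formal.
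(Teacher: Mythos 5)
The paper does not actually prove this proposition: it is imported verbatim from Keller \cite[Prop.~3.7]{Kel11}, so there is no internal proof to compare with and your proposal must be judged on its own. Its overall shape (cofibrancy via property (P), plus showing the augmentation is a quasi-isomorphism) is correct and is essentially the standard argument, but the central step is heavier than it needs to be. Once you have checked that $\theta$ and $\mu$ are morphisms of dg bimodules with $\mu\circ\theta=0$, consider the sequence
\[
0 \To B \otimes \Bbbk\overline Q_1 \otimes B \xrightarrow{\ \theta\ } B \otimes B \xrightarrow{\ \mu\ } B \To 0 .
\]
Its exactness is a condition on the underlying graded vector spaces only and does not see the differentials: forgetting them, $B$ is the tensor algebra $T_{\Bbbk Q_0}(\Bbbk\overline Q_1)$ and this is the classical bimodule resolution of a tensor algebra, hence exact in every degree. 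A degreewise exact sequence of dg bimodules is a short exact sequence of complexes, so $\mathrm{Cone}(\theta)\To B$ is a quasi-isomorphism at once --- no contracting homotopy, no perturbation lemma, no spectral sequence. If you insist on the perturbation route, you must justify convergence: the path-length filtration is \emph{not} bounded in a fixed cohomological degree (degree $0$ already contains paths of arbitrary length), so ``convergent spectral sequence'' is not automatic for a merely exhaustive, non-complete filtration. It does converge here, but you should say why: either decompose everything into finite-dimensional summands via the Adams weight (arrows of $\widetilde Q$ of weight $1$, loops $t_i$ of weight $2$, all maps weight-preserving), or filter by the number of loops $t_i$ occurring in a tensor, which is bounded by the cohomological degree and is strictly decreased by the perturbation, since $d$ kills the arrows and replaces each $t_i$ by a length-two path of arrows. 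Your cofibrancy argument is fine as stated; note only that a finite three-step filtration (first $B\otimes B$, then adjoining $B\otimes\Bbbk\widetilde Q_1\otimes B$, then $B\otimes \Bbbk Q_0 t\otimes B$) already exhibits property (P), each subquotient being a projective graded bimodule with the untwisted differential because $\Bbbk Q_0$ is separable.
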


\subsection{Hochschild cohomology of $2$-dimensional Ginzburg dg algebras}\label{subsection:hochschildginzburg}
We apply the resolution in Proposition \ref{prop:resolution} to compute the bigraded Hochschild cohomology $\HH^{2, q}(B, B)$. 

For this, we will view $B$ as a differential bigraded algebra by assigning each arrow in the double quiver $\widetilde Q$ the bidegree $(0, 1)$ and each loop $t_i$ the bidegree  $(-1, 2)$. Then each path admits a bidegree by the multiplication which is of bidegree $(0,0)$. Clearly,  the differential $d$ is of bidegree $(1,0)$. Notice that the cofibrant resolution $\mathrm{Cone}(\theta)$ in Proposition \ref{prop:resolution} is compatible with the bigrading, so it can be used to compute the bigraded Hochschild cohomology.  

The following result is the key observation of this notice. 
\begin{prop}\label{prop:ginzburghh}
For any integer $q$, we have
\[
\HH^{2,q}(B, B) \xlongrightarrow{_\sim} (\Lambda_Q/[\Lambda_Q, \Lambda_Q])^{q+2}\: ,
\]
where $\Lambda_Q=\Bbbk \widetilde Q/\sum_{\alpha \in Q_1} [\alpha, \alpha^*]$ is the preprojective algebra of $Q$.
\end{prop}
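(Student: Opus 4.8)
The plan is to feed Keller's resolution $\mathrm{Cone}(\theta)$ from Proposition~\ref{prop:resolution} into $\Hom_{B^e}(-, B)$ and read off $\HH^{2,q}(B,B)$ as a cohomology group of the resulting bigraded complex. First I would use the adjunction $\Hom_{B^e}(B \otimes V \otimes B, B) \simeq \Hom_{(\Bbbk Q_0)^e}(V, B) \simeq \bigoplus_{x\colon i\to j} e_j B e_i$ to rewrite the cochain complex purely in terms of $B$: the generator of $B \otimes B$ has bidegree $(0,0)$, the arrow generators $\alpha, \alpha^*$ have bidegree $(0,1)$, and the loop generators $t_i$ have bidegree $(-1,2)$, while the mapping cone contributes an extra cohomological shift of $1$ to the $B \otimes \Bbbk\overline Q_1 \otimes B$ part. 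Since the quiver is finite and each Adams degree is finite-dimensional, the products defining $\Hom$ reduce to direct sums. The one structural fact that makes everything collapse is that $B = \Bbbk\overline Q$ is concentrated in non-positive cohomological degrees (a monomial with $b$ loops sits in cohomological degree $-b$), so $B^{m, \bullet} = 0$ for all $m > 0$.

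Next I would list, in each fixed total bidegree, the three slots coming from $\eta$, from the arrows, and from the loops, together with the internal degree of $B$ they force. Tracking the cone shift, in total bidegree $(2,q)$ the $\eta$-slot needs $B^{2,\bullet}$ and the arrow-slot needs $B^{1,\bullet}$, both of which vanish; only the loop-slot at internal degree $0$ survives, and it equals $L_2 := \bigoplus_i (e_i B e_i)^{0, q+2}$, i.e.\ the loop-free closed paths of length $q+2$. The same vanishing shows that the entire total degree $3$ is zero, so the differential out of $L_2$ is zero and hence
\[
\HH^{2,q}(B, B) \;\simeq\; \operatorname{coker}\bigl(D \colon (\text{total degree } 1) \To L_2\bigr).
\]
It therefore remains to compute the two components of $D$ landing in $L_2$.

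The first component is the internal differential $d$ acting on the loop-slot at total degree $1$, namely on $\bigoplus_i (e_i B e_i)^{-1, q+2}$, the closed paths containing exactly one loop; since $d(t_j) = \sum_{\alpha} e_j[\alpha,\alpha^*]e_j$, its image is spanned by the closed paths obtained by substituting $\sum_\alpha [\alpha,\alpha^*]$ for the loop, i.e.\ exactly the closed paths lying in the preprojective ideal. The second component is the transpose of the internal differential $\delta$ restricted to the loop generators, where $\delta(t_i) = \sum_\alpha(1 \otimes \alpha \otimes \alpha^* + \alpha \otimes \alpha^* \otimes 1 - \cdots)$; evaluated on an arrow-cochain $\psi$ it produces commutators of a path with an arrow, for instance $\psi(\alpha)\alpha^* - \alpha^*\psi(\alpha)$. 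Quotienting $L_2$ by the first image imposes the preprojective relation, giving $\bigoplus_i e_i \Lambda_Q e_i$ in degree $q+2$; the second image then realizes the differences of a closed path and its cyclic rotations, i.e.\ the kernel of the trace map $\bigoplus_i e_i \Lambda_Q e_i \To \Lambda_Q/[\Lambda_Q, \Lambda_Q]$. Hence the cokernel is $(\Lambda_Q/[\Lambda_Q,\Lambda_Q])^{q+2}$, as claimed.

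The main obstacle I anticipate is making the second step fully rigorous: one must pin down the Koszul signs in the arrow-component of $D$ coming from the mapping cone and from $\rho$, and then verify that the combined image of the two components is exactly the degree-$(q+2)$ part of the defining relations of $\Lambda_Q/[\Lambda_Q,\Lambda_Q]$ — in particular that imposing the preprojective relation and cyclic invariance may be carried out simultaneously with no further collapse. Identifying $\bigoplus_i e_i \Lambda_Q e_i \To \Lambda_Q/[\Lambda_Q,\Lambda_Q]$ as a surjection whose kernel is the cyclic differences (using that a non-closed path already lies in $[\Lambda_Q,\Lambda_Q]$) is the clean way to finish.
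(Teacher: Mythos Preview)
Your proposal is correct and follows essentially the same route as the paper: both apply $\Hom_{B^e}(-,B)$ to Keller's resolution, use the non-positivity of $B$ to reduce degree~$2$ to the loop-slot $\bigoplus_i e_iB^{0,q+2}e_i$ and degree~$3$ to zero, and then identify the two incoming components of the differential as the preprojective relation (from $d$) and cyclic rotation (from the arrow-slot via $\delta$). The paper handles your anticipated obstacle by writing the arrow-component explicitly as $\partial_1(f)=\sum_\alpha([f(\alpha),\alpha^*]+[\alpha,f(\alpha^*)])$ and then using the telescoping identity $[x_1\cdots x_p,\,x_{p+1}\cdots x_{p+q}]=\sum_{i=1}^p[x_i,\,x_{i+1}\cdots x_{p+q}x_1\cdots x_{i-1}]$ to show that every commutator of closed paths lies in $\mathrm{Im}(\bar\partial_1)$, which is exactly the ``cyclic differences'' step you outlined.
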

\begin{proof}
Notice that for any integers $p$ and $q$, we have 
\[
\begin{tikzcd}
\Hom^{p, q}_{B^e}(\mathrm{Cone}(\theta), B) \arrow{r}{\sim} & \Hom^{p, q}(\Bbbk Q_0, B) \oplus \Hom^{p-1, q}(\Bbbk \overline Q_1, B)\: ,
\end{tikzcd}
\]
where we use the natural isomorphism
\[
\begin{tikzcd}
\Hom^{p, q}_{B^e}(B\otimes X\otimes B, B) \arrow{r}{\sim} & \Hom^{p, q}(X, B)
\end{tikzcd}
\]
for $X = \Bbbk Q_0$ and $X= \Bbbk \overline Q_1$. Recall that we denote $\otimes = \otimes_{\Bbbk Q_0}$ and $\Hom=\Hom_{(\Bbbk Q_0)^e}$. Clearly, we have the following decomposition of $\Bbbk Q_0$-bimodule (i.e.\ $(\Bbbk Q_0)^e$-module)
\[
\Bbbk \overline Q_1 = \Bbbk \widetilde Q_1 \oplus \Bbbk Q_0 t\: ,
\]
where notice that $t = \sum_{i\in Q_0} t_i$ is $\Bbbk Q_0$-central. It yields the following decomposition
\begin{align*}
\Hom^{p-1, q}(\Bbbk \overline Q_1, B) & \iso \Hom^{p-1, q}(\Bbbk \widetilde Q_1, B) \oplus  \Hom^{p-1, q}(\Bbbk Q_0 t, B)\\
& \simeq \Hom(\Bbbk \widetilde Q_1, B^{p-1,q+1}) \oplus \Hom(\Bbbk Q_0, B^{p-2, q+2}),
\end{align*}
where the second isomorphism follows since $\Bbbk\widetilde Q_1$ is concentrated in bidegree $(0,1)$ and $\Bbbk Q_0 t$ is concentrated in bidegree $(-1,2)$.

Since $B$ is concentrated in (cohomological) non-positive degrees, it follows that we have
\[
 \Hom^{\geq 1, q}(\Bbbk Q_0, B) = 0 \quad \text{and} \quad \Hom^{\geq 2, q}(\Bbbk \overline Q_1, B) = 0\: .
\]
Therefore, for any fixed integer $q$, the complex $\Hom^{\bullet, q}_{B^e}(\mathrm{Cone}(\theta), B)$ in degrees $1$, $2$, $3$ is given by
\[
\begin{tikzcd}[ampersand replacement=\&]
\Hom^{1, q}_{B^e} \arrow{rr}\arrow{d}{\wr} \& \& \Hom^{2, q}_{B^e}\arrow{r}\arrow{d}{\wr} \& \Hom^{3, q}_{B^e}\arrow{d}{\wr} \\
\Hom(\Bbbk\widetilde{Q}_1, B^{0,q+1}) \oplus \bigoplus_{i \in Q_0} e_i B^{-1,q+2} e_i\arrow{rr}{\partial=
\begin{bsmallmatrix}
\partial_1 & \partial_2
\end{bsmallmatrix}
} \& \& \bigoplus_{i \in Q_0} e_i B^{0, q+2} e_i\arrow{r} \& 0\mathrlap{\: ,}
\end{tikzcd}
\]
where we denote $\Hom^{p, q}_{B^e}=\Hom^{p, q}_{B^e}(\mathrm{Cone}(\theta), B)$ and use the  isomorphism
\[
\Hom(\Bbbk Q_0, B^{p,q}) \xlongrightarrow{_\sim} \bigoplus_{i \in Q_0} e_i B^{p,q} e_i \: .
\]
The map $\partial = 
\begin{bmatrix}
\partial_1 & \partial_2
\end{bmatrix}$
is given as follows. For any $f \in \Hom(\Bbbk \widetilde Q_1, B^{0,q+1})$,  we have 
\begin{align}\label{align:partial1}
\partial_1(f) = \sum_{\alpha\in Q_1} \left( f(\alpha)\alpha^* +\alpha f(\alpha^*) - f(\alpha^*) \alpha - \alpha^* f(\alpha)\right) = \sum_{\alpha \in Q_1} \left([f(\alpha), \alpha^*] + [ \alpha, f(\alpha^*)]\right)\: .
\end{align}
For any $ p \in  \bigoplus_{i \in Q_0} e_i B^{-1, q+2} e_i$, we have $\partial_2(p) = d(p)$, where $d$ is the differential of $B$. 
 
Notice that we have $\HH^{2, q}(B, B) \iso \mathrm{Coker}(\partial)$. By the isomorphism $\mathrm{H}^{0}(B^{\bullet, q})\iso \Lambda_Q^q$, see e.g.\ \cite[Thm.~7]{EL}, we infer that we have
\[
\begin{tikzcd}
\mathrm{Coker}(\partial_2) \arrow[no head]{r}{\sim} & \bigoplus_{i\in Q_0} e_i\mathrm{H}^{0}(B^{\bullet, q+2})e_i \arrow{r}{\sim} & \bigoplus_{i\in Q_0} e_i\Lambda_Q^{q+2}e_i \: ,
\end{tikzcd}
\]
where $\bigoplus_{i\in Q_0} e_i\Lambda_Q^{q+2}e_i$ is spanned by the cycles of length $q+2$.  Then the map $\partial_1$ induces
\[
 \overline{\partial_1} \colon \Hom(\Bbbk \widetilde Q_1, B^{0,q+1}) \longrightarrow \bigoplus_{i\in Q_0} e_i\Lambda_Q^{q+2}e_i \: .
 \]

Clearly, we have $\mathrm{Coker}(\partial)\iso \mathrm{Coker}(\overline{\partial_1})$ and thus
\[
\begin{tikzcd}
\HH^{2, q}(B, B) \arrow{r}{\sim} & \mathrm{Coker}(\overline{\partial_1}) \arrow{r}{\sim} & (\Lambda_Q/[\Lambda_Q, \Lambda_Q])^{q+2}\: .
\end{tikzcd}
\]
Here let us explain the last isomorphism. We have the natural map (induced by the inclusion)
\[
\varphi \colon \bigoplus_{i\in Q_0} e_i\Lambda_Q^{q+2}e_i \longrightarrow (\Lambda_Q/[\Lambda_Q, \Lambda_Q])^{q+2}\: .
\]
Notice that $\varphi$ is surjective since any non-cyclic path in $\Lambda_Q$ belongs to $[\Lambda_Q, \Lambda_Q]$. By the equality~\eqref{align:partial1}, we have $\varphi \circ \overline{\partial_1} = 0$. Thus, it suffices to show that we have $\mathrm{Ker}(\varphi) \subseteq \mathrm{Im}(\overline{\partial_1})$. For this, notice that for any nontrivial cycle $x_1x_2\ldots x_{p+q} \in \Lambda_Q$ with $x_i\in \widetilde Q_1$, the following identity
\begin{align}\label{align:bracket}
[x_1\ldots x_p, x_{p+1}\ldots x_{p+q}] = [x_1, x_2\ldots x_{p+q}] + \cdots + [x_{p}, x_{p+1}\ldots x_{p+q}x_1\ldots x_{p-1}]
\end{align}
holds. Clearly, $\mathrm{Ker}(\varphi)$ is spanned by commutators of the form on the left hand side of the equality~\eqref{align:bracket}. We claim that each term on the right hand side of the equality~\eqref{align:bracket} lies in $\mathrm{Im}(\overline{\partial_1})$. Indeed, for each $1\leq i \leq p$, we define an element $f_i\in \Hom(\Bbbk \widetilde Q_1, B^{0,q+1})$  as follows. If $x_i$ equals $\alpha$ for some $\alpha \in Q_1$, then we define
$$f_i(\alpha^*)=x_{i+1}\ldots x_{p+q}x_1 \ldots x_{i-1}$$
and $f_i(\beta)=0$ for all $\beta \neq \alpha^*$ in $\widetilde Q_1$. If $x_i$ equals $\alpha^*$ for some $\alpha \in Q_1$, then we define  $$f_i(\alpha)=-x_{i+1}\ldots x_{p+q}x_1 \ldots x_{i-1}$$ and $f_i(\beta)=0$ for all $\beta \neq \alpha$ in $\widetilde Q_1$. Then by the equality~\eqref{align:partial1}, we have $$\overline{\partial_1}(f_i)=[x_i,x_{i+1}\ldots x_{p+q}x_1 \ldots x_{i-1}].$$ This proves the claim. So $\mathrm{Ker}(\varphi)$ is contained in $\mathrm{Im}(\overline{\partial_1})$.
\end{proof}

\begin{rem}\label{remark:vandenberghduality}
This remark is due to Bernhard Keller. Let $B$ be a connective and left $d$-Calabi--Yau dg algebra (in our case, $d$ equals $2$). Then we have isomorphisms 
\[
\HH^d(B, B) \liso \mathrm{H}^d(B \otimes_{B^e}^{\mathbb L} \mathbb{R}\mathrm{Hom}_{B^e}(B, B^e))\iso \mathrm{H}^0(B \otimes_{B^e}^{\mathbb L} B) \simeq \mathrm{H}^0(B) \otimes_{(\mathrm{H}^0(B))^e} \mathrm{H}^0(B)\: ,
\]
where the first isomorphism uses the smoothness of $B$, the second uses the Calabi--Yau property (i.e.\ $\mathbb{R}\mathrm{Hom}_{B^e}(B, B^e)[d] \iso B$), and the third uses the connectiveness (i.e. $\mathrm{H}^{>0}(B) = 0$).

This argument gives a proof of the isomorphism in Proposition~\ref{prop:ginzburghh} after forgetting the Adams grading without using the small cofibrant resolution. The isomorphism in Proposition \ref{prop:ginzburghh} may be viewed as a bigraded version of {\it Van den Bergh duality} \cite{VdB}. The bigrading is useful in order to write explicitly dg deformations of $B$ corresponding to elements of $\HH^{2,q}(B, B)$ in section~\ref{subsection4.3}.
\end{rem}

\begin{cor}\label{corollary:HH}Let $Q$ be any finite connected quiver and $B$ the $2$-dimensional Ginzburg dg algebra $\Pi_2(Q)$.  
Then we have $\HH^{2, q}(B, B) = 0$ for all positive integers $q$ if and only if the underlying graph of $Q$ is of type $ADE$ and the field $\Bbbk$ is not of bad characteristic.  \end{cor}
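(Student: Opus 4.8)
The plan is to reduce the whole statement, by a formal step, to the computation of the graded space $\Lambda_Q/[\Lambda_Q,\Lambda_Q]$ carried out in \cite{Sc} and \cite{MOV}, and then to read off their answer in the two relevant directions. First I would apply Proposition \ref{prop:ginzburghh}: since $\HH^{2,q}(B,B)\simeq(\Lambda_Q/[\Lambda_Q,\Lambda_Q])^{q+2}$ for every $q$, the condition that $\HH^{2,q}(B,B)=0$ for all $q>0$ is \emph{equivalent} to the condition that $(\Lambda_Q/[\Lambda_Q,\Lambda_Q])^m=0$ for all $m\geq 3$, i.e.\ that the trace space $\Lambda_Q/[\Lambda_Q,\Lambda_Q]$ is concentrated in degrees $\leq 2$. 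Thus the corollary becomes a purely algebraic statement about the graded preprojective algebra, and it is worth noting that the (possibly delicate) degree-$2$ component plays no role: only the part in degrees $\geq 3$ matters.

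For the non-ADE case I would argue uniformly in the characteristic. If the underlying graph of $Q$ is not of type ADE, then $\Lambda_Q$ is infinite-dimensional, and by the description of $\Lambda_Q/[\Lambda_Q,\Lambda_Q]$ in \cite{Sc} (together with the standard infinite-dimensionality of non-Dynkin preprojective algebras) this trace space is nonzero in arbitrarily high degree, in particular in some degree $\geq 3$. Since this holds over every field, the vanishing $\HH^{2,q}(B,B)=0$ for $q>0$ fails in every characteristic, as required.

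For the ADE case I would invoke \cite[Thm.~13.1.1]{Sc} and \cite[Thm.~1.4]{MOV}, which give the graded vector space $\Lambda_Q/[\Lambda_Q,\Lambda_Q]$ over an arbitrary field $\Bbbk$ (equivalently, over $\mathbb Z$ followed by base change). Here $\Lambda_Q$ is finite-dimensional and, because the underlying tree is bipartite, $\Lambda_Q/[\Lambda_Q,\Lambda_Q]$ is concentrated in even degrees, so the relevant part lives in even degrees $\geq 4$. According to these references, this high-degree part is a torsion phenomenon whose support is governed by $\mathrm{char}\,\Bbbk$: for type A it vanishes for every field (no bad primes), whereas for types D and E a nonzero component appears in one or more specific high degrees precisely when $\mathrm{char}\,\Bbbk$ is a bad prime ($2$ for D, $2$ or $3$ for E$_6$, E$_7$, and $2,3$ or $5$ for E$_8$). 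Combining this with the non-ADE case yields the stated equivalence.

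I expect the main obstacle to be the bookkeeping in the ADE case rather than the reduction: one must match the integral (or characteristic-dependent) answer of \cite{Sc,MOV}—specifically the exact degrees in which the torsion of $\Lambda_Q/[\Lambda_Q,\Lambda_Q]$ is supported and the primes dividing its order—with the list of bad characteristics in the statement, and separately verify that type A contributes nothing in degrees $\geq 3$. By contrast, the first reduction via Proposition \ref{prop:ginzburghh} and the non-ADE direction are essentially formal once the infinite-dimensionality of non-Dynkin preprojective algebras is invoked.
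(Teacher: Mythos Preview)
Your proposal is correct and follows essentially the same route as the paper: reduce via Proposition~\ref{prop:ginzburghh} to the question of whether $(\Lambda_Q/[\Lambda_Q,\Lambda_Q])^{>2}$ vanishes, and then read off the answer from \cite{Sc} and \cite{MOV}. The only noteworthy difference is in the non-ADE direction, where the paper makes the argument concrete by passing to an extended-ADE subquiver $Q'\subset Q$, using the induced surjection $\Lambda_Q/[\Lambda_Q,\Lambda_Q]\twoheadrightarrow\Lambda_{Q'}/[\Lambda_{Q'},\Lambda_{Q'}]$, and then applying \cite[Thm.~13.1.1]{Sc} to $Q'$---so you need not worry about whether \cite{Sc} treats arbitrary non-Dynkin $Q$ directly, nor about inferring infinite-dimensionality of the trace space from that of $\Lambda_Q$ itself.
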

\begin{proof}
 If the underlying graph of $Q$ is of type $ADE$, then this follows from Proposition \ref{prop:ginzburghh} and \cite[Thm.~13.1.1]{Sc} which involves the bad characteristics. Otherwise, the quiver $Q$ contains a subquiver $Q'$ whose underlying graph is of extended $ADE$ type, so that the map \linebreak $\Lambda_Q/[\Lambda_Q, \Lambda_Q]\to \Lambda_{Q'}/[\Lambda_{Q'}, \Lambda_{Q'}]$ is surjective. By \cite[Thm.~13.1.1]{Sc} again, the latter is always infinite-dimensional, so is $\HH^{2, >0}(B, B)$ by Proposition \ref{prop:ginzburghh} for any field $\Bbbk$.
\end{proof}

\section{$A_\infty$-deformations of zigzag algebras} 
\subsection{$A_\infty$-deformations}
We recall some basic notions on $A_\infty$-deformations of graded algebras. We refer to \cite{Kel01} for  basic notions on $A_\infty$-algebras.
\begin{defn}
Let $(A, \mu)$ be a graded algebra with multiplication $\mu$. An {\it $A_\infty$-deformation} of $A$ is a unital $A_\infty$-algebra $(A, \mu_1, \mu_2, \mu_3, \ldots)$ such that $\mu_1=0$ and $\mu_2 = \mu$. Two $A_\infty$-deformations of $A$ are {\it equivalent} if they are $A_\infty$-isomorphic to each other. 

We say that $A$ is {\it intrinsically formal} if it admits no  nontrivial $A_\infty$-deformations. 
\end{defn}

Notice that a graded algebra $A$ may be viewed as a bigraded algebra by assigning each element in $A^i$ the bidegree $(i, -i)$. The following useful criterion for intrinsic formality will be used. 
\begin{thm}[{\cite[Cor.~4]{Kad}, \cite[Thm.~4.7]{ST} and \cite[Cor.~5.4]{RW}}] \label{thm:KST}  
Let $A$ be a graded algebra. If we have $\HH^{2,q}(A, A) = 0$ for all positive integers $q$, then $A$ is intrinsically formal. 
\end{thm}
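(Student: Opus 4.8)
The plan is to run the standard Kadeishvili-type obstruction argument, with the Adams grading serving as the bookkeeping device that both locates the obstructions in the right cohomology groups and guarantees convergence of the inductive normalization. The goal is to show that an arbitrary $A_\infty$-deformation $(A, \mu_1 = 0, \mu_2 = \mu, \mu_3, \mu_4, \dotsc)$ is $A_\infty$-isomorphic to the formal structure $(A, 0, \mu, 0, 0, \dotsc)$. First I would record the bidegrees. Each higher product $\mu_n \colon A^{\otimes n} \to A$ with $n \geq 3$ has cohomological degree $2 - n$; under the bigrading $A^i \mapsto (i, -i)$ this forces $\mu_n$ to carry Adams degree exactly $n - 2$, so that $\mu_n$ is the length-$n$ component of a Hochschild cochain lying in $C^{2, n-2}(A, A)$. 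The decisive numerical point is that $n - 2 > 0$ for every $n \geq 3$.

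Next I would extract a cocycle condition from the $A_\infty$-associativity (Stasheff) identities. Writing $b$ for the Hochschild differential on $C^{\bullet, \bullet}(A, A)$, the identity of total length $n + 1$ takes the form $b(\mu_n) = P_n(\mu_3, \dotsc, \mu_{n-1})$, where $P_n$ is a quadratic expression built from the strictly lower products together with $\mu_2$; here one uses $\mu_1 = 0$ to discard the degenerate terms. For the base case $n = 3$ the right-hand side is empty, so $\mu_3$ is a $2$-cocycle of Adams degree $1$.

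The induction then proceeds by successively gauging away the $\mu_n$. Suppose $A_\infty$-isomorphisms have already been used to arrange $\mu_3 = \dotsb = \mu_{n-1} = 0$. Then $P_n = 0$, so $\mu_n$ is a cocycle whose class lies in $\HH^{2, n-2}(A, A)$; since $n - 2 > 0$, the hypothesis forces this group to vanish, whence $\mu_n = b(g)$ for some $g \in C^{1, n-2}(A, A)$. I would then apply the $A_\infty$-isomorphism $f$ with $f_1 = \id$ whose only other nonzero component is the length-$(n-1)$ map $f_{n-1}$ equal to the relevant component of $g$; such an $f$ replaces the given structure by an equivalent one in which the new product in length $n$ is $\mu_n - b(f_{n-1}) = 0$, while leaving all products of length $< n$ unchanged.

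The only genuine obstacle is to confirm that this step neither resurrects the already-vanished products $\mu_3, \dotsc, \mu_{n-1}$ nor spoils convergence of the infinite sequence of gauges, and this is exactly where the Adams grading is indispensable. Since $g$ sits in Adams degree $n - 2$, the $A_\infty$-isomorphism $f$ alters the structure only in Adams degrees $\geq n - 2$, and the products $\mu_3, \dotsc, \mu_{n-1}$ live in the strictly smaller Adams degrees $1, \dotsc, n - 3$; hence they are untouched. Consequently each fixed Adams degree $q$ is modified by only the finitely many gauges indexed by $n$ with $n - 2 \leq q$, so the composite of all the gauge transformations is well-defined and furnishes an $A_\infty$-isomorphism from the original deformation to the formal structure. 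This exhibits every deformation as trivial, which is precisely intrinsic formality.
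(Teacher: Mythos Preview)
The paper does not supply its own proof of this statement: Theorem~\ref{thm:KST} is quoted from \cite[Cor.~4]{Kad} and \cite[Thm.~4.7]{ST} as a black box, and is invoked later only as an input to the proof of Theorem~\ref{thm:ET}. So there is no in-paper argument to compare against.

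That said, your sketch is correct and is essentially the argument one finds in the cited references. The bidegree bookkeeping is right: with $A^i$ placed in bidegree $(i,-i)$, each $\mu_n$ lands in $C^{2,n-2}(A,A)$, the Stasheff relation in arity $n+1$ reads $b(\mu_n)=P_n(\mu_3,\dotsc,\mu_{n-1})$, and the hypothesis kills the obstruction class once the lower $\mu_k$ have been gauged away. One small sharpening: since $A$ has zero internal differential, the Hochschild differential $b$ raises arity by exactly one, so a coboundary witness for $\mu_n$ may be taken to be a single map $g\colon A^{\otimes(n-1)}\to A$ rather than a product of components; you do not need to speak of ``the relevant component of $g$''. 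Your convergence argument via the Adams degree is fine, though note that in this diagonal bigrading the Adams degree of $f_{n-1}$ equals $n-2$ precisely because its arity is $n-1$; the Adams filtration and the arity filtration coincide, and the standard arity argument (each $f^{(m)}$ is the identity in arities below $m-1$, so only finitely many factors affect any fixed arity of the composite) gives the same conclusion without invoking the second grading.
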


\subsection{Zigzag algebras} We are interested in $A_\infty$-deformations of zigzag algebras.
\begin{defn}[{\cite[\S 3]{HK}}]\label{defn:zigzag}
Let $\Gamma$ be a finite connected graph without loops and multiple edges. If $\Gamma$ has more than two vertices, then the {\it zigzag algebra} $Z(\Gamma)$ is the path algebra $\Bbbk \overline \Gamma$ of the double quiver $\overline \Gamma$ (i.e.\ replacing each edge by two arrows with opposite directions) modulo the relations: all $2$-cycles at each vertex are equal (but nonzero) and all paths of length $2$ excluding $2$-cycles are zero. If the graph $\Gamma$ is $A_1$,  then by convention we define $Z(\Gamma) = \Bbbk [x]/(x^2)$. If the graph $\Gamma$ is $A_2$, then we define $Z(\Gamma)$ as the quotient of $\Bbbk \overline \Gamma$ by the two-sided ideal generated by all paths of length greater than $2$. 
\end{defn}
We view $Z(\Gamma)$ as a graded algebra by assigning each arrow degree $1$ and study its \linebreak $A_\infty$-deformations. Notice that $Z(\Gamma)$ is of finite total dimension and concentrated in degrees $0, 1$ and $2$ such that we have $\dim Z(\Gamma)^0 = |\Gamma_0| = \dim Z(\Gamma)^2$ and $\dim Z(\Gamma)^1 = 2 |\Gamma_1|$. For example, let $\Gamma= D_4$. Then we have $Z(\Gamma) = \Bbbk \overline\Gamma / (\alpha_i\beta_i=\alpha_j\beta_j, \beta_i\alpha_j=0)_{i\neq j}$, where $\overline\Gamma$ is given as follows. 
 \[
\begin{tikzpicture}
\begin{scope}
\node at (-1.6,-0.55) {$\overline \Gamma=$};
\node[] (a0) at (-1.1,-1.1) {};
\filldraw [black] (-1.1,-1.1) circle (1pt);
\node[] (a1) at (0,-1.1) {};
\filldraw [black] (0,-1.1) circle (1pt);
\node[] (a2) at (0,0) {};
\filldraw [black] (0,0) circle (1pt);
\node[] (a3) at (1.1,-1.1) {};
\filldraw [black] (1.1,-1.1) circle (1pt);

\path[->, line width=.7pt] (a0) edge[transform canvas={yshift=.5ex}] node[font=\scriptsize,above=-.3ex] {$\alpha_1$} (a1);
\path[<-, line width=.7pt] (a0) edge[transform canvas={yshift=-.5ex}] node[font=\scriptsize,below=-.3ex] {$\beta_1$} (a1);
\path[<-, line width=.7pt] (a1) edge[transform canvas={xshift=.5ex}] node[font=\scriptsize,right=-.3ex] {$\alpha_2$} (a2);
\path[->, line width=.7pt] (a1) edge[transform canvas={xshift=-.5ex}] node[font=\scriptsize,left=-.3ex] {$\beta_2$} (a2);
\path[<-, line width=.7pt] (a1) edge[transform canvas={yshift=-.5ex}] node[font=\scriptsize,below=-.3ex] {$\alpha_3$} (a3);
\path[->, line width=.7pt] (a1) edge[transform canvas={yshift=.5ex}] node[font=\scriptsize,above=-.3ex] {$\beta_3$} (a3);
\end{scope}
\end{tikzpicture}
\]

\subsection{Derived Koszul duality between zigzag algebras and $2$-dimensional Ginzburg dg algebras}\label{subsection:3.3}

Let $\Gamma$ be a finite tree and $Q$ any fixed quiver whose underlying graph is $\Gamma$ and each vertex is a sink or a source. Recall from section \ref{subsection:hochschildginzburg} that $\Pi_2(Q)$ is bigraded. Similarly, the algebra $Z(\Gamma)$ is bigraded whose arrows are of bidegree $(1, -1)$. 

It follows from \cite[\S~5.3]{EL} that $Z(\Gamma)$ is derived Koszul dual to $\Pi_2(Q)$, i.e.\ there are quasi-isomorphisms
\begin{align}\label{algin:koszulduality}
\mathbb{R}\mathrm{Hom}^{\bullet,\bullet}_{Z(\Gamma)}(\Bbbk \Gamma_0, \Bbbk \Gamma_0) \simeq \Pi_2(Q)\quad \text{and} \quad \mathbb{R}\mathrm{Hom}^{\bullet,\bullet}_{\Pi_2(Q)}(\Bbbk Q_0, \Bbbk Q_0) \simeq Z(\Gamma)
\end{align}
of differential bigraded algebras. We remark that if we view $Z(\Gamma)$ as an ordinary graded algebra without Adams grading, then the dg algebra $\mathbb{R}\mathrm{Hom}_{Z(\Gamma)}(\Bbbk \Gamma_0, \Bbbk \Gamma_0)$ is quasi-isomorphic to the completion of $\Pi_2(Q)$  with respect to the length filtration.

The following result will be used in our proof. 
\begin{prop}[{\cite[Thm.~27]{EL} and \cite[\S~3.1]{Kel03}}] \label{prop:Zigginzburghh}
For any integers $p$ and $q$, we have an isomorphism 
\begin{equation}\label{isomorphism}
\begin{tikzcd}
\HH^{p,q}(Z(\Gamma), Z(\Gamma)) \arrow[no head]{r}{\sim} & \HH^{p, q}(\Pi_2(Q), \Pi_2(Q))\: .
\end{tikzcd}
\end{equation}
\end{prop}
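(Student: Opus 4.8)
The plan is to read (\ref{isomorphism}) off the derived Koszul duality (\ref{algin:koszulduality}) by appealing to the derived invariance of Hochschild cohomology. The guiding principle is that the bigraded Hochschild cohomology $\HH^{\bullet,\bullet}(A,A)=\Ext^{\bullet,\bullet}_{A^e}(A,A)$ is determined by the derived category of $A$ together with its natural dg enhancement, and Koszul duality furnishes exactly such an equivalence (respecting the Adams grading) between $Z(\Gamma)$ and $\Pi_2(Q)$. Concretely, I would invoke Keller's comparison of Hochschild complexes under Koszul duality in \cite[\S~3.5]{Kel03}, whose hypotheses are verified by (\ref{algin:koszulduality}) and \cite[Thm.~27]{EL}.

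First I would fix the bimodule that implements the duality. Write $A=Z(\Gamma)$, $B=\Pi_2(Q)$, and let $S=\Bbbk\Gamma_0$ denote the sum of the vertex simples, regarded as a differential bigraded module over each of $A$ and $B$. The first quasi-isomorphism in (\ref{algin:koszulduality}) identifies $B$ with the derived endomorphism algebra $\mathbb{R}\mathrm{Hom}^{\bullet,\bullet}_A(S,S)$, so that $S$ acquires the structure of a differential bigraded $A$-$B$-bimodule; symmetrically, the second quasi-isomorphism recovers $A$ as $\mathbb{R}\mathrm{Hom}^{\bullet,\bullet}_B(S,S)$. This reflexivity is precisely what is needed for the Koszul duality functor $\mathbb{R}\mathrm{Hom}^{\bullet,\bullet}_A(S,-)$, together with its adjoint $S\otimes^{\mathbb L}_B-$, to restrict to mutually inverse equivalences between the relevant derived categories of differential bigraded modules. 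Because the maps in (\ref{algin:koszulduality}) are morphisms of differential bigraded algebras, both the duality functor and the bimodule $S$ are compatible with the Adams grading.

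With this equivalence in place, Keller's result \cite[\S~3.5]{Kel03} produces an isomorphism of Hochschild cochain complexes for $A$ and $B$, hence an isomorphism $\HH^{p,q}(A,A)\simeq\HH^{p,q}(B,B)$ for all $p,q$; the Adams grading is matched because the dualizing bimodule $S$ is itself differential bigraded. This is the asserted isomorphism (\ref{isomorphism}).

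The only delicate point, and the step I would check most carefully, is the bookkeeping of the two gradings: one must verify that both the Koszul duality equivalence and Keller's comparison isomorphism preserve the Adams grading strictly, so that (\ref{isomorphism}) holds bidegree by bidegree and not merely after forgetting the second grading. This is guaranteed by the fact that (\ref{algin:koszulduality}) is an equivalence of \emph{differential bigraded} algebras rather than of plain dg algebras; granting this, the remainder is the standard derived-invariance argument and no further obstacle arises.
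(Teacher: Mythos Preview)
Your proposal is correct and follows exactly the route the paper takes: the paper does not give a self-contained proof of this proposition but instead records it as a consequence of the derived Koszul duality \eqref{algin:koszulduality} together with Keller's comparison \cite[\S~3.5]{Kel03}, which is precisely the argument you spell out. Your added detail about the dualising bimodule $S=\Bbbk\Gamma_0$ and the compatibility with the Adams grading is a welcome elaboration of what the paper leaves implicit in the citations.
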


\begin{rem}\label{rem:Koszulalgebra}
The above derived Koszul duality \eqref{algin:koszulduality} might not work if $\Gamma$ is not a tree. Nevertheless, recall from \cite{Mar} that for any graph $\Gamma$ as in Definition \ref{defn:zigzag}, $Z(\Gamma)$ is always Koszul (in the classical sense) if $\Gamma$ is not of type $ADE$, and its Koszul dual $Z(\Gamma)^!$ is the path algebra $\Bbbk \overline \Gamma$ modulo the relations: the sum of all $2$-cycles is zero; see \cite[\S~6.1]{HK}. Then by \cite[\S~3.5]{Kel03}, we have  
\[
\begin{tikzcd}
\HH^{p,q}(Z(\Gamma), Z(\Gamma)) \arrow[no head]{r}{\sim} & \HH^{p, q}(Z(\Gamma)^!, Z(\Gamma)^!)\: ,
\end{tikzcd} 
\]
where the algebra $Z(\Gamma)^!$ is bigraded whose arrows are of bidegree $(0, 1)$. If the graph $\Gamma$ is a tree and not of type $ADE$, then this coincides with the isomorphism~\eqref{isomorphism} since we have
\[
\begin{tikzcd}
Z(\Gamma)^! \arrow[no head]{r}{\sim} & \Lambda_Q & \Pi_2(Q) \arrow[swap]{l}{\sim} \: ;
\end{tikzcd}
\]
see \cite{HK} for the first isomorphism. 
\end{rem}

\subsection{The proof}\label{subsection4.3}
Now let us give our proof of Theorem \ref{thm:ET}. 
\begin{proof}[Another proof of Theorem \ref{thm:ET}]
Let us first prove the sufficiency. By Theorem \ref{thm:KST}, it suffices to show that if $\Gamma$ is of type $ADE$ and $\Bbbk$ is not of bad characteristic, then we have \linebreak $\HH^{2, q}(Z(\Gamma), Z(\Gamma)) =0 $ for all positive integers $q$. This follows by combining Proposition  \ref{prop:Zigginzburghh} and Corollary \ref{corollary:HH}.

Now we prove the necessity. For this, by Corollary~\ref{corollary:HH} and Proposition~\ref{prop:ginzburghh}, it suffices to show that any nonzero element $w$ in $(\Lambda_Q/[\Lambda_Q, \Lambda_Q])^{q+2}$ induces a nontrivial $A_\infty$-deformation of $Z(\Gamma)$. The element $w$ lifts to a linear combination $w'$ of cycles in $\Lambda_Q$ of length greater than $2$, which induces a nontrivial first-order dg deformation of $\Pi_2(Q)$ by $d'(t) = d(t) + w'\epsilon$, where $t = \sum_{i\in Q_0} t_i$. Notice that this deformation naturally extends to an actual (or global) dg deformation
\[
B'=(\Pi_2(Q), d'|_{\epsilon=1})
\]
of $\Pi_2(Q)$. Then the $A_\infty$-algebra $Z(\Gamma)'=\mathrm{Ext}^\bullet_{B'}(\Bbbk Q_0, \Bbbk Q_0)$ is a nontrivial $A_\infty$-deformation (compare \cite[Prop.~4.7]{BW}) of $Z(\Gamma)$ such that  $\mathbb{R}\mathrm{Hom}_{Z(\Gamma)'}(\Bbbk \Gamma_0, \Bbbk \Gamma_0)$ is quasi-isomorphic to the completion of $B'$ with respect to the length filtration. Here we need to take the completion since $B'$ and $Z(\Gamma)'$ are ordinary dg algebras without Adams grading; compare the derived Koszul duality~\eqref{algin:koszulduality}. 
\end{proof}

\begin{rem}
Theorem \ref{thm:ET} may be more general. Namely, let $\Gamma$ be any graph as in Definition \ref{defn:zigzag} which is not of type $ADE$. Then the graded algebra $\Pi_2(\Gamma)$ is not intrinsically formal, i.e.\ it admits nontrivial $A_\infty$-deformations. The reason is as follows.  By Remark \ref{rem:Koszulalgebra}, we have
\begin{align*}
\HH^{2, q}(Z(\Gamma), Z(\Gamma))&\simeq \HH^{2, q}(Z(\Gamma)^!, Z(\Gamma)^!) \\
&\simeq \HH_{0, q}(Z(\Gamma)^!,Z(\Gamma)^!) \\
&\simeq (Z(\Gamma)^!/[Z(\Gamma)^!,Z(\Gamma)^!])^{0, q+2}\: ,
\end{align*}
where the second isomorphism follows since $Z(\Gamma)^!$ is left $2$-Calabi--Yau (as $Z(\Gamma)$ is right $2$-Calabi--Yau). We point out that $Z(\Gamma)^!$ might not be isomorphic to a preprojective algebra unless $\Gamma$ is a tree (or bipartite); compare Remark \ref{rem:Koszulalgebra}. But by a similar computation to \cite[Thm.~13.1.1]{Sc} and \cite[Theorem 1.4.b]{MOV}, also see \cite[page 518]{MOV},  we may show that $(Z(\Gamma)^!/[Z(\Gamma)^!,Z(\Gamma)^!])^{0, >2}$ is of infinite total dimension.  \end{rem}

\begin{exm}
Let $\Gamma$ be the extended D$_4$ graph.   Then combining Propositions \ref{prop:ginzburghh} and \ref{prop:Zigginzburghh} with  \cite[Thm.~13.1.1]{Sc} we obtain $\HH^{2, >0}(Z(\Gamma), Z(\Gamma)) \simeq (e_4 \Lambda_Q e_4)^{>2}$, which is of infinite total dimension. Here $e_4$ corresponds to the bottom vertex of the quiver below. So $Z(\Gamma)$ admits an infinite-dimensional family of $A_\infty$-deformations. For instance, the cycle $\beta_4\alpha_1\beta_1\alpha_4$ induces a nontrivial $A_\infty$-deformation of $Z(\Gamma)$ such that $m_4$ is nonzero only when
\begin{align*}
m_4(\beta_4 \otimes \alpha_1 \otimes \beta_1 \otimes \alpha_4) = \beta_4 \alpha_4 \ko & m_4(\alpha_4 \otimes \beta_4 \otimes \alpha_1 \otimes \beta_1) = \alpha_1 \beta_1 \: , \\
m_4(\beta_1 \otimes \alpha_4 \otimes \beta_4 \otimes \alpha_1) = \beta_1 \alpha_1 \ko & m_4(\alpha_1 \otimes \beta_1 \otimes \alpha_4 \otimes \beta_4) = \alpha_4 \beta_4
\end{align*}
(up to scalar) and all the other higher multiplications acting on arrows vanish. 
\[
\begin{tikzpicture}
\begin{scope}
\node at (-1.6,0) {$\overline \Gamma=$};
\node[] (a0) at (-1.1,0) {};
\filldraw [black] (-1.1,0) circle (1pt);
\node[] (a1) at (0,0) {};
\filldraw [black] (0,0) circle (1pt);
\node[] (a2) at (0,1.1) {};
\filldraw [black] (0,1.1) circle (1pt);
\node[] (a3) at (1.1,0) {};
\filldraw [black] (1.1,0) circle (1pt);
\node[] (a4) at (0,-1.1) {};
\filldraw [black] (0,-1.1) circle (1pt);

\path[->, line width=.7pt] (a0) edge[transform canvas={yshift=.5ex}] node[font=\scriptsize,above=-.3ex] {$\alpha_1$} (a1);
\path[<-, line width=.7pt] (a0) edge[transform canvas={yshift=-.5ex}] node[font=\scriptsize,below=-.3ex] {$\beta_1$} (a1);
\path[<-, line width=.7pt] (a1) edge[transform canvas={xshift=.5ex}] node[font=\scriptsize,right=-.3ex] {$\alpha_2$} (a2);
\path[->, line width=.7pt] (a1) edge[transform canvas={xshift=-.5ex}] node[font=\scriptsize,left=-.3ex] {$\beta_2$} (a2);
\path[<-, line width=.7pt] (a1) edge[transform canvas={yshift=-.5ex}] node[font=\scriptsize,below=-.3ex] {$\alpha_3$} (a3);
\path[->, line width=.7pt] (a1) edge[transform canvas={yshift=.5ex}] node[font=\scriptsize,above=-.3ex] {$\beta_3$} (a3);
\path[<-, line width=.7pt] (a1) edge[transform canvas={xshift=-.5ex}] node[font=\scriptsize,left=-.3ex] {$\alpha_4$} (a4);
\path[->, line width=.7pt] (a1) edge[transform canvas={xshift=.5ex}] node[font=\scriptsize,right=-.3ex] {$\beta_4$} (a4);
\end{scope}
\end{tikzpicture}
\]

\end{exm}
\vskip -5pt

\noindent {\bf Acknowledgements.}  \; The authors are very grateful to Severin Barmeier, Bernhard Keller, Yank\i~Lekili and Travis Schedler for many helpful comments and discussions. The first-named author also  warmly thanks the Institute of Algebra and Number Theory at the University of Stuttgart for the hospitality while working on this paper. The second-named author was supported by the National Key R\&D Program of China (2024YFA1013803), the NSFC \linebreak (Nos.~13004005, 12371043 and 12071137) and the DFG grant (WA 5157/1-1). The authors are grateful to the referee for helpful comments and suggestions.
\vskip -5pt

\vskip 7pt

{\footnotesize \noindent Junyang Liu\\  School of Mathematical Sciences, University of Science and Technology of China \\Hefei 230026, China \\liuj$\symbol{64}$imj-prg.fr \\https://webusers.imj-prg.fr/\~{}junyang.liu/}

\vskip 5pt 

{\footnotesize \noindent Zhengfang Wang\\School of Mathematics, Nanjing University\\Nanjing 210093, China \\zhengfangw$\symbol{64}$gmail.com}

\end{document}